\documentclass[12pt]{article}
\usepackage[T1]{fontenc}
\usepackage{lmodern,amsmath,amsthm,amsfonts,amssymb,graphicx,float,microtype,thmtools,underscore,mathtools,xurl}
\usepackage[usenames,dvipsnames,svgnames,table]{xcolor}
\usepackage[shortlabels]{enumitem}
\setlist[itemize]{topsep=0ex,itemsep=0ex,parsep=0ex}
\setlist[enumerate]{topsep=0ex,itemsep=0ex,parsep=0ex}
\usepackage[unicode=true]{hyperref}
\hypersetup{ 
colorlinks,
breaklinks=true,
linkcolor={blue!60!black},
citecolor={black},
urlcolor={blue!60!black},
pdftitle={Product Structure Extension of the Alon--Seymour--Thomas Theorem}}
\usepackage[capitalise, compress, nameinlink, noabbrev]{cleveref}
\crefname{lem}{Lemma}{Lemmas}
\crefname{thm}{Theorem}{Theorems}
\crefname{cor}{Corollary}{Corlaries}
\newcommand{\defn}[1]{\textcolor{Maroon}{\emph{#1}}}

\usepackage[longnamesfirst,numbers,sort&compress]{natbib}
\makeatletter
\def\NAT@spacechar{~}
\makeatother
\setlength{\bibsep}{0.4ex plus 0.2ex minus 0.2ex}
\usepackage[tmargin=25mm,bmargin=25mm,lmargin=30mm,rmargin=30mm]{geometry}
\renewcommand{\baselinestretch}{1.11}
\setlength{\footnotesep}{\baselinestretch\footnotesep}
\setlength{\parindent}{0cm}
\setlength{\parskip}{1.25ex}
\allowdisplaybreaks

\DeclarePairedDelimiter{\floor}{\lfloor}{\rfloor}
\DeclarePairedDelimiter{\ceil}{\lceil}{\rceil}

\DeclarePairedDelimiter{\set}{\{}{\}} 
\renewcommand{\epsilon}{\varepsilon}
\renewcommand{\emptyset}{\varnothing}
\renewcommand{\ge}{\geqslant}
\renewcommand{\le}{\leqslant}
\renewcommand{\geq}{\geqslant}
\renewcommand{\leq}{\leqslant}
\DeclareMathOperator{\dist}{dist}
\DeclareMathOperator{\tw}{tw}

\newcommand{\RR}{\mathbb{R}}

\newcommand{\PP}{\mathcal{P}}

\newcommand{\GG}{\mathcal{G}}

\newcommand{\LL}{\mathcal{L}}
\newcommand{\NN}{\mathbb{N}}
\newcommand{\OO}{\mathcal{O}}
\newcommand{\TT}{\mathcal{T}}

\newcommand{\david}[1]{{\color{orange} DW: #1}}

\newcommand{\michal}[1]{\textcolor{brown}{MS: #1}}

\newcommand{\torso}[2]{#1\langle #2\rangle}
\renewcommand{\thefootnote}{\fnsymbol{footnote}}
\theoremstyle{plain}
\newtheorem{thm}{Theorem}
\newtheorem{lem}[thm]{Lemma}
\newtheorem{cor}[thm]{Corollary}
\newtheorem{obs}[thm]{Observation}
\newtheorem{prop}{Proposition}
\newtheorem*{claim}{Claim}
\crefname{obs}{Observation}{Observations}
\newtheorem*{lem*}{Lemma}
\theoremstyle{definition}

\newtheorem*{conj*}{Conjecture}

\begin{document}
\title{\bf\boldmath\fontsize{18pt}{18pt}\selectfont
Product Structure Extension of the Alon--Seymour--Thomas Theorem}

\author{
Marc Distel\,\footnotemark[2] \qquad
Vida Dujmovi{\'c}\,\footnotemark[6]\qquad
David Eppstein\,\footnotemark[3] \\
Robert~Hickingbotham\,\footnotemark[2] \qquad
Gwena\"el Joret\,\footnotemark[4] \qquad
Piotr Micek\,\footnotemark[5] \\
Pat Morin\,\footnotemark[9]\qquad
Micha\l{} T.~Seweryn\,\footnotemark[4] \qquad
David~R.~Wood\,\footnotemark[2]
}

\maketitle

\begin{abstract}
Alon, Seymour and Thomas [1990] proved that every $n$-vertex graph excluding $K_t$ as a minor has treewidth less than $t^{3/2}\sqrt{n}$. Illingworth, Scott and Wood [2022] recently refined this result by showing that every such graph is a subgraph of some graph with treewidth $t-2$, where each vertex is blown up by a complete graph of order $\OO(\sqrt{tn})$. Solving an open problem of Illingworth, Scott and Wood [2022], we prove that the treewidth bound can be reduced to $4$ while keeping blowups of order $\OO_t(\sqrt{n})$. As an extension of the Lipton--Tarjan theorem, in the case of planar graphs, we show that the treewidth can be further reduced to $2$, which is best possible. We generalise this result for $K_{3,t}$-minor-free graphs, with blowups of order $\OO(t\sqrt{n})$. This setting includes graphs embeddable on any fixed surface.
\end{abstract}

\footnotetext[2]{School of Mathematics, Monash University, Melbourne, Australia (\texttt{\{marc.distel,robert.hickingbotham, david.wood\}@monash.edu}). Research of Wood supported by the Australian Research Council. Research of Distel and Hickingbotham supported by Australian Government Research Training Program Scholarships.}

\footnotetext[6]{School of Computer Science and Electrical Engineering, University of Ottawa, Ottawa, Canada (\texttt{vida.dujmovic@uottawa.ca}). Research supported by NSERC.}

\footnotetext[3]{Computer Science Department, University of California, Irvine (\texttt{eppstein@uci.edu}). Research supported in part by NSF grant CCF-2212129.}
 
\footnotetext[4]{D\'epartement d'Informatique, Universit\'e libre de Bruxelles, Belgium (\texttt{\{gwenael.joret,michal.seweryn\}@ulb.be}). G.\ Joret is supported by a CDR grant from the Belgian National Fund for Scientific Research (FNRS), by a PDR grant from FNRS, and by the Australian Research Council. Research of M.T.\ Seweryn supported by a PDR grant from the Belgian National Fund for Scientific Research (FNRS).}

\footnotetext[5]{Department of Theoretical Computer Science, Jagiellonian University, Kraków, Poland (\texttt{piotr.micek@uj.edu.pl}). Research supported 
the National Science Center of Poland under grant UMO-2018/31/G/ST1/03718 within the BEETHOVEN program.}

\footnotetext[9]{School of Computer Science, Carleton University, Ottawa, Canada (\texttt{morin@scs.carleton.ca}). Research supported by NSERC and the Ontario Ministry of Research and Innovation.}

\renewcommand{\thefootnote}{\arabic{footnote}}

\section{\Large Introduction}

Treewidth is a measure of how similar a given graph is to a tree, and is of fundamental importance in structural and algorithmic graph theory; see \citep{Bodlaender98,Reed03,HW17} for surveys. 

In one of the cornerstone results of Graph Minor Theory, \citet{AST90} proved that every $n$-vertex $K_t$-minor-free graph $G$ has treewidth $\tw(G)<t^{3/2}n^{1/2}$, which implies that $G$ has a balanced separator of order at most $t^{3/2}n^{1/2}$. For fixed $t\geq 5$, this bound is asymptotically tight since the $n^{1/2}\times n^{1/2}$ grid is $K_5$-minor-free and has treewidth~$n^{1/2}$. 

Our goal is to prove qualitative strengthenings of the Alon--Seymour--Thomas theorem through the lens of graph product structure theory, which describes graphs in complicated classes as subgraphs of products of simpler graphs. Here we consider products of bounded treewidth graphs and complete graphs. To be precise, for a graph $H$ and $m\in\NN$, let  $H\boxtimes K_m$ be the \defn{strong product} of $H$ and a complete graph $K_m$, which is the `complete-blow-up' of $H$ by $K_m$; that is, the graph obtained by replacing each vertex of $H$ by a copy of $K_m$ and replacing each edge of $H$ by the complete join between the corresponding copies of $K_m$. Say a graph $G$ is \defn{contained} in a graph $X$ if $G$ is isomorphic to a subgraph of $X$. 

\citet{ISW} showed that for any integer \(t \ge 4\), every $n$-vertex $K_t$-minor-free graph $G$ is contained in $H\boxtimes K_m$, for some graph $H$ with treewidth at most $t-1$, where $m < \sqrt{tn}$. This result implies and stregthens the Alon--Seymour--Thomas theorem since
\[\tw(G)\leq \tw(H\boxtimes K_m)\leq (\tw(H)+1)m-1 < t\sqrt{tn}.\]
Importantly, they also showed a similar result with treewidth \(t-2\) (and a slightly larger value of \(m\)): every $n$-vertex $K_t$-minor-free graph $G$ is contained in $H\boxtimes K_m$, for some graph $H$ with treewidth at most $t-2$, where $m < 2\sqrt{tn}$.

The following definition, implicitly introduced by \citet{ISW}, naturally arises. For a proper minor-closed graph class $\GG$, let \defn{$f(\GG)$} be the minimum integer such that for some $c$, every $n$-vertex graph $G\in \GG$ is contained in $H\boxtimes K_m$, for some graph $H$ with treewidth at most $f(\GG)$, where $m \leq c \sqrt{n}$.  
The above result of \citet{ISW} implies that $f(\GG)$ is well-defined; in particular, if $\GG_t$ is the class of $K_t$-minor-free graphs, then $f(\GG_t)\leq t-2$.

\citet{ISW} asked whether $f(\GG)$ is upper bounded by an absolute constant. This paper answers this question in the affirmative.

\begin{thm}
\label{ConstantTwSqrtn}
Every $n$-vertex $K_t$-minor-free graph $G$ is contained in $H \boxtimes K_m$ for some graph $H$ of treewidth at most $4$, where $m\in O_t(\sqrt{n})$. 
\end{thm}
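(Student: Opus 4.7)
The plan is to invoke a product structure theorem for $K_t$-minor-free graphs and then convert the resulting path factor into a complete-clique blowup. Concretely, I would first apply a theorem asserting that every $K_t$-minor-free graph $G$ is contained in $H_0 \boxtimes P \boxtimes K_{c(t)}$ for some graph $H_0$ of small absolute treewidth, a path $P$, and a constant $c(t)$ depending only on $t$. Such product structure theorems for $K_t$-minor-free graphs are available via the work of Dujmovi{\'c}, Joret, Micek, Morin, Ueckerdt, and Wood and its subsequent refinements.

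Next, I would convert the path factor into a clique blowup. Partition $V(P)$ into consecutive intervals of length $s := \lceil \sqrt{n}\, \rceil$ and let $P'$ be the (shorter) path whose vertices are these intervals. The standard embedding $P \subseteq P' \boxtimes K_s$ collapses each interval into a clique, so
\[
    G \subseteq H_0 \boxtimes P \boxtimes K_{c(t)} \subseteq H_0 \boxtimes P' \boxtimes K_{s\, c(t)}.
\]
Setting $H := H_0 \boxtimes P'$ and $m := s\, c(t) = O_t(\sqrt{n})$ yields $G \subseteq H \boxtimes K_m$, as required. Since $P'$ is a path, the standard product bound gives $\tw(H) = \tw(H_0 \boxtimes P') \le 2\tw(H_0) + 1$, so it suffices to have $\tw(H_0) \le 1$ to obtain $\tw(H) \le 3$, or to argue directly using the specific structure of $H_0$ to obtain the claimed bound $\tw(H) \le 4$.

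The main obstacle will be guaranteeing that the product structure theorem delivers $H_0$ with the required small treewidth while keeping the constant $c(t)$ finite. Off-the-shelf product structure theorems for $K_t$-minor-free graphs typically yield an $H_0$ whose treewidth depends on $t$, so the proof will likely need either a refined product structure theorem (where $\tw(H_0)$ is an absolute constant independent of $t$) or a direct construction. In the latter case, I would build the partition from scratch by recursively applying the Alon--Seymour--Thomas separator theorem together with a BFS layering: each super-layer of BFS (thickness roughly $\sqrt{n}$) contributes a tree-like decomposition, and the interaction between consecutive super-layers is controlled by a small interface structure, yielding a quotient graph of treewidth at most $4$.
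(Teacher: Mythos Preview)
Your proposal has a genuine gap. The first route you sketch requires a product structure theorem of the form $G\subseteq H_0\boxtimes P\boxtimes K_{c(t)}$ with $\tw(H_0)$ an \emph{absolute} constant, and you yourself note that off-the-shelf theorems only give $\tw(H_0)$ growing with $t$. That is not a minor technicality: obtaining an absolute-constant bound on $\tw(H_0)$ in such a product structure would already be a result at least as strong as the theorem you are trying to prove (and would in fact give $\tw(H)\le 2\tw(H_0)+1$, so you would need $\tw(H_0)\le 1$ to hit the stated bound of~$4$). Your fallback---``build the partition from scratch by recursively applying Alon--Seymour--Thomas together with a BFS layering''---is not a proof; it is a one-sentence restatement of the goal, and it does not address the two features of $K_t$-minor-free graphs that make this hard: apex vertices (which are adjacent to everything and kill any naive BFS argument) and vortices (which are not controlled by the surface embedding).

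The paper's proof is substantially more involved than either of your sketches. It goes through the Graph Minor Structure Theorem: $G$ has a tree-decomposition whose torsos are $k$-almost-embeddable, with $k=k(t)$. For each torso, the paper proves (\cref{NewAlmostEmbeddable}) that after deleting a set $S$ of size $O(|B_x|/\sqrt{n})+O_t(1)$ containing the apex set, the remainder has a $J$-partition of width $O_t(\sqrt{n})$ with $\tw(J)\le 3$; the vortices are handled by chopping their path-decompositions into blocks of $\Theta(\sqrt{n})$ bags and encoding each block by a single special vertex before applying the genus-bounded product structure (\cref{GenusPartition}) and collapsing $\Theta(\sqrt{n})$ BFS layers at a time. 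Then (\cref{CliqueSumNew}) these per-torso partitions are glued along the tree of the clique-sum, using a weighted-tree separator lemma to pick $O(\sqrt{n})$ ``important'' nodes, and all deleted sets $S_x$ together with the adhesion sets are swept into one dominant part of total size $O_t(\sqrt{n})$. That single dominant vertex in the quotient is what raises the treewidth from $3$ to $4$.
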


\cref{ConstantTwSqrtn} implies that $f(\GG)\leq 4$ for every proper minor-closed class $\GG$. The proof of \cref{ConstantTwSqrtn} actually shows that $\tw(H-v)\leq 3$ for some vertex $v\in V(H)$.

We also give improved bounds on $f(\GG)$ for particular minor-closed classes $\GG$. First consider the class $\LL$ of  planar graphs. The Lipton--Tarjan separator theorem~\citep{LT79} is one of the most important structural results about planar graphs, with numerous algorithmic applications~\citep{LT80}. It is equivalent to saying that every $n$-vertex planar graph has treewidth $\OO(\sqrt{n})$ (see \citep{DN19}). Since planar graphs are $K_5$-minor-free, the above result of \citet{ISW} shows that $f(\LL)\leq 3$. Our next contribution shows that $f(\LL)\leq 2$, resolving an open problem of \citet{ISW}. 

\begin{thm}
\label{Planar}
Every $n$-vertex planar graph is contained in $H\boxtimes K_m$, where $H$ is a graph with treewidth 2 and $m\in \OO(\sqrt{n})$.
\end{thm}

As an aside, since every graph with treewidth 2 is planar, the graph $H$ in \cref{Planar} is planar (although not necessarily a minor of the original planar graph). 

We actually prove a more general result than \cref{Planar} for graphs that exclude a $K_{3,t}$ minor. 

\begin{thm}
\label{K3t}
Every $K_{3,t}$-minor-free $n$-vertex graph is contained in $H\boxtimes K_m$, where $H$ is a graph with treewidth 2 and $m\in \OO(t\sqrt{n})$. 
\end{thm}

Since $K_{3,3}$ is not planar, \cref{K3t} with $t=3$ implies \cref{Planar}. More generally, \cref{K3t} also implies results for graphs embeddable in any fixed surface. The \defn{Euler genus} of a surface with~$h$ handles and~$c$ cross-caps is~${2h+c}$. The \defn{Euler genus} of a graph~$G$ is the minimum integer $g\geq 0$ such that there is an embedding of~$G$ in a surface of Euler genus~$g$; see \cite{MoharThom} for more about graph embeddings in surfaces. It follows from Euler's formula that $K_{3,2g+3}$ has Euler genus greater than $g$. Thus \cref{K3t} implies:

\begin{cor}
\label{Genus}
Every $n$-vertex graph with Euler genus $g$ is contained in $H\boxtimes K_m$, where $H$ is a graph with treewidth 2 and $m\in \OO((g+1)\sqrt{n})$.
\end{cor}

Note that \citet{GHT-JAlg84} and \citet{Djidjev85a} proved that $n$-vertex graphs with Euler genus $g > 0$ admit balanced separators of order $\OO(\sqrt{gn})$ and thus have treewidth $\OO(\sqrt{gn})$. \cref{Genus} is a qualitative strengthening of these results, with slightly worse dependence on $g$. 


\subsection{Related Work} 

We first mention a connection to clustered colouring.
A (vertex-) $k$-colouring of a graph has \defn{clustering} $c$ if every monochromatic component has at most $c$ vertices. This is equivalent to saying that $G$ is contained in $H\boxtimes K_c$ for some graph $H$ with $\chi(H)\leq k$. Clustered colouring has been widely studied in recent years; see \citep{WoodSurvey} for a survey. \citet{LMST08} showed that $n$-vertex planar graphs, and more generally graphs excluding any fixed minor, are 3-colourable with clustering $\OO(\sqrt{n})$. Since treewidth 2 graphs are 3-colourable, in the case of planar or $K_{3,t}$-minor-free graphs, \cref{Planar,K3t} are a qualitative improvement over the result of \citet{LMST08}. 

Clustered colourings also provide lower bounds. \citet{LMST08} constructed a family of planar graphs $\{G_k:k\geq1\}$, where $G_k$ has $2k^3+1$ vertices and every $2$-colouring of~$G_k$ has a monochromatic component with at least $k^2/2$ vertices. 
In particular, if $G_k$ is contained in $H\boxtimes K_m$ for some graph $H$ with treewidth 1 (that is, $H$ is a forest), then a proper $2$-colouring of $H$ determines a 2-colouring of $G_k$ with clustering $m$, implying $m\in\Omega(n^{2/3})$ where $n:=|V(G_k)|$. Hence $f(\LL)>1$. Therefore the bounds on the treewidth of $H$ in \cref{Planar,K3t,Genus} are best possible. In particular,  $f(\LL)=2$, and if $\GG_{3,t}$ is the class of $K_{3,t}$-minor-free graphs, then $f(\GG_{3,t})=2$ for $t\geq 3$. These lower bounds lead to the following characterisation of minor-closed classes $\GG$ with $f(\GG) \leq 1$.
 

\begin{prop}
For a minor-closed class $\GG$, $f(\GG) \leq 1$  if and only if $\GG$ has bounded treewidth.
\end{prop}

\begin{proof}
\citet[Theorem~8 with $t=1$]{DvoWoo} proved that every $n$-vertex graph with treewidth $k$ is contained in $H\boxtimes K_m$ where $H$ is a star and $m\leq \sqrt{(k+1)n}$. Since a star has treewidth 1, if $\GG$ has bounded treewidth, then $f(\GG) \leq 1$. For the converse, if $\GG$ has unbounded treewidth, then by the Grid Minor Theorem~\citep{RS-V}, every planar graph is in $\GG$, and thus $f(\GG)\geq f(\LL)= 2$, as desired.  
\end{proof}

We conclude by mentioning the following related definition and results. \citet{UTW} defined the \defn{underlying treewidth} of a graph class $\GG$ to be the minimum integer $k$ such that for some function $g$ every graph $G\in\GG$ is contained in $H\boxtimes K_m$ where $\tw(H)\leq k$ and $m\leq g(\tw(G))$. Here $m$ is required to depend only on $\tw(G)$, whereas the present paper allows $m\in O(\sqrt{n})$. Amongst other results, \citet{UTW} showed\footnote{In the result of \citet{UTW}, $g(w)\in O_t(w^2\log w)$, which was improved to $O_t(w)$ by \citet{ISW}.} that the underlying treewidth of $\GG_t$ equals $t-2$. Thus, in the underlying treewidth setting, no absolute bound on $\tw(H)$ is possible, unlike in the setting of $\OO(\sqrt{n})$ blowups, where \cref{ConstantTwSqrtn} achieves $\tw(H)\leq 4$.  There is a similar distinction for planar graphs. \citet{UTW} showed that the underlying treewidth of the class of planar graphs equals 3. So in \cref{Planar} with $\tw(H)\leq 2$, the bound of $m\in \OO(\sqrt{n})$ cannot be improved to $m\leq g(\tw(G))$ for any function $g$. See \citep{DHHJLMMRW} for recent results on underlying treewidth. 

\section{\Large Background}
\label{Background}

For $m,n \in \mathbb{Z}$ with $m \leq n$, let $[m,n]:=\{m,m+1,\dots,n\}$ and $[n]:=[1,n]$. 

We consider simple, finite, undirected graphs~$G$ with vertex-set~${V(G)}$ and edge-set~${E(G)}$. 

For a graph $G$ and set $S\subseteq V(G)$, let $N_G(S):= \{v\in V(G)\setminus S: \exists\, vw\in E(G), w\in S\}$ and let $N_G[S]:= N_G(S)\cup S$. We drop the subscript $G$ if the graph in question is clear. 

A \defn{tree-decomposition} of a graph $G$ is a collection $\TT=(B_x :x\in V(T))$ of subsets of $V(G)$ (called \defn{bags}) indexed by the vertices of a tree $T$, such that (a) for every edge $uv\in E(G)$, some bag $B_x$ contains both $u$ and $v$, and (b) for every vertex $v\in V(G)$, the set $\{x\in V(T):v\in B_x\}$ induces a non-empty (connected) subtree of $T$. The \defn{width} of~$\TT$ is $\max\{|B_x| \colon x\in V(T)\}-1$. The \defn{treewidth} of a graph $G$, denoted by \defn{$\tw(G)$}, is the minimum width of a tree-decomposition of $G$. 

Consider a tree-decomposition $\TT=(B_x :x\in V(T))$ of a graph $G$. The \defn{adhesion} of $\TT$ is $\max\{|B_x\cap B_y| \colon xy\in E(T)\}$. The \defn{torso} of a bag $B_x$ (with respect to $\TT$), denoted by \defn{$\torso{G}{B_x}$}, is the graph obtained from the induced subgraph $G[B_x]$ by adding edges so that $B_x\cap B_y$ is a clique for each edge $xy\in E(T)$. 
We say $\TT$ is \defn{rooted} if $T$ is rooted. Then, for each $x\in V(T)$, a clique $C$ in the torso $\torso{G}{B_x}$ is a \defn{child-adhesion clique} if there is a child $y$ of $x$ such that $C\subseteq B_x\cap B_y$. 

A \defn{path-decomposition} is a tree-decomposition in which the underlying tree is a path, simply denoted by the corresponding sequence of bags $(B_1,\dots,B_n)$.

A graph $H$ is a \defn{minor} of a graph $G$ if $H$ is isomorphic to a graph that can be obtained from a subgraph of $G$ by contracting edges. A graph~$G$ is \defn{$H$-minor-free} if~$H$ is not a minor of~$G$. A graph class $\GG$ is \defn{minor-closed} if every minor of every graph in $\GG$ is in $\GG$. A graph class is \defn{proper} if it is not the class of all graphs. 
The graph minor structure theorem of \citet{RS-XVI} shows that every $K_t$-minor-free graph has a tree-decomposition where each torso can be constructed using three ingredients: graphs on surfaces, vortices, and apex vertices. To describe this formally, we need the following definitions. 

Let $G_0$ be a graph embedded in a surface $\Sigma$. 
A closed disc $D$ in $\Sigma$ is \defn{$G_0$-clean} if its only points of intersection with $G_0$ are vertices of $G_0$ that lie on the boundary of $D$. Let $x_1,\dots,x_b$ be the vertices of $G_0$ on the boundary of $D$ in the order around $D$. A \defn{$D$-vortex} (with respect to $G_0$) of a graph $H$ is a path-decomposition $(B_1,\dots,B_b)$ of $H$ such that $x_i\in B_i$ for each $i\in[b]$, and $V(G_0\cap H)=\{x_1,\dots,x_b\}$.

For integers $g,p,a\geq0$ and $k\geq1$, a graph $G$ is \defn{$(g,p,k,a)$-almost-embeddable} if for some set $A\subseteq V(G)$ with $|A|\leq a$, there are graphs $G_0,G_1,\dots,G_p$ such that:
\begin{itemize}
\item $G-A = G_{0} \cup G_{1} \cup \cdots \cup G_p$,
\item $G_{1}, \dots, G_p$ are pairwise vertex-disjoint,
\item $G_{0}$ is embedded in a surface $\Sigma$ of Euler genus at most $g$,
\item there are $p$ pairwise disjoint $G_0$-clean closed discs $D_1,\dots,D_p$ in $\Sigma$, and
\item for $i\in[p]$, there is a $D_i$-vortex $(B_1,\dots,B_{b_i})$ of $G_i$ of width at most $k$.
\end{itemize}
The vertices in $A$ are called \defn{apex} vertices---they can be adjacent to any vertex in $G$. A graph is \defn{$\ell$-almost-embeddable} if it is $(g,p,k,a)$-almost-embeddable for some $g,p,k,a\leq \ell$.

We use the following version of the graph minor structure theorem, which is implied by a result of \citet[Theorem~4]{DKMW12}.

\begin{thm}[\citep{DKMW12}]
\label{GMSTimproved}
For every integer $t\geq 1$ there exists an integer $k\geq 1$ such that every $K_t$-minor-free graph $G$ has a rooted tree decomposition $(B_x\colon x\in V(T))$ such that for every node $x\in V(T)$, the torso $\torso{G}{B_x}$ is $k$-almost-embeddable and if $A_x$ is the apex-set of $\torso{G}{B_x}$, then for every child-adhesion clique $C$ of $\torso{G}{B_x}$, either $C\setminus A_x$ is contained in a bag of a vortex of $\torso{G}{B_x}$, or $|C\setminus A_x|\leq 3$.
\end{thm}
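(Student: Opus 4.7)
The plan is to derive this refined structure theorem as a direct consequence of Theorem~4 of \citet{DKMW12}, which strengthens the classical Robertson--Seymour Graph Minor Structure Theorem by imposing additional constraints on how adhesion sets interact with the almost-embedding of each torso.

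First, I would apply \citet[Theorem~4]{DKMW12} to the $K_t$-minor-free graph $G$. This yields a rooted tree-decomposition $(B_x : x\in V(T))$ in which each torso $\torso{G}{B_x}$ is $k'$-almost-embeddable for some $k'=k'(t)$, together with control over the intersection of each adhesion set $B_x\cap B_y$ with the surface/vortex/apex decomposition of the parent torso. The remaining work is to translate this control into the required clique condition on child-adhesion cliques.

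Second, fix a node $x$ with apex set $A_x$ and a child-adhesion clique $C\subseteq B_x\cap B_y$, and set $C':=C\setminus A_x$. Then $C'$ is a clique in the non-apex part $G_0\cup G_1\cup\cdots\cup G_s$ of the almost-embedding of $\torso{G}{B_x}$. Since two vertices in distinct vortex subgraphs are non-adjacent, and a non-boundary vortex vertex has no edge to $G_0$ outside its disc boundary, $C'$ must lie either inside a single vortex subgraph $G_i$, or entirely within the embedded part $G_0$. In the vortex case, the Helly property of the path-decomposition of the $D_i$-vortex places $C'$ within a single vortex bag, which matches the first alternative of the theorem.

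Third, the remaining case is $C'\subseteq V(G_0)$. Here $C'$ is a clique in a graph embedded on a surface of Euler genus $\leq k'$, hence $|C'|\leq O(\sqrt{k'})$ by Euler's formula. The main obstacle is to sharpen this to the stated bound $|C'|\leq 3$: the plan is to absorb any excess vertices of such a clique into the apex set $A_x$, propagate the promoted vertices into every descendant bag, and re-examine the almost-embeddings of the descendant torsos. A top-down sweep of $T$ controls the cumulative growth of the apex parameter by using \citet{DKMW12}'s bounds on adhesion size, producing a final constant $k=k(t)$ for which both the $k$-almost-embeddability of torsos and the stated clique condition hold simultaneously.
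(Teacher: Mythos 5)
First, be aware that the paper does not actually prove \cref{GMSTimproved}: it is quoted as a known result, stated to be implied by \citet[Theorem~4]{DKMW12}, whose conclusion already contains the crucial clause---outside the apex set, each adhesion set either lies in a bag of a vortex or consists of at most three vertices of the embedded part. Since every child-adhesion clique is a subset of an adhesion set, the statement follows by a translation of terminology, and that is the entire content of the paper's ``proof''. Your proposal starts from the same citation, so the starting point matches, but you then treat the cited theorem as if it supplied only $k'$-almost-embeddability plus some unspecified ``control'', and you attempt to re-derive the $|C\setminus A_x|\le 3$ clause yourself. That is where the genuine gap lies.

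Concretely, in the surface case you only obtain $|C\setminus A_x|\le O(\sqrt{k'})$, and your repair---promoting the excess clique vertices into $A_x$, propagating them into every descendant bag, and controlling the growth by a top-down sweep---does not work. A node $x$ of $T$ may have arbitrarily many children, each contributing its own child-adhesion clique in the embedded part, so the number of vertices you would need to promote at a single node is not bounded by any function of $t$; and pushing promoted vertices into all descendant bags compounds along root-to-leaf paths of unbounded length, so the adhesion bound gives no uniform constant $k(t)$ at the end of the sweep. (A secondary issue: your Helly step is also not automatic, because a clique of the torso whose vertices all lie in a vortex $G_i$ need not be a clique of the graph $G_i$ itself---edges between disc-boundary vertices may be surface edges---so it need not lie in a single bag of the vortex path-decomposition.) The correct route is simply to invoke the full strength of \citet[Theorem~4]{DKMW12}, whose grounding conditions on adhesion sets are exactly the ``vortex bag or at most three surface vertices'' alternative, rather than trying to reconstruct that clause by an apex-absorption argument.
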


The \defn{strong product} of graphs~$A$ and~$B$, denoted by~${A \boxtimes B}$, is the graph with vertex-set~${V(A) \times V(B)}$, where distinct vertices ${(v,x),(w,y) \in V(A) \times V(B)}$ are adjacent if
	${v=w}$ and ${xy \in E(B)}$, or
	${x=y}$ and ${vw \in E(A)}$, or
	${vw \in E(A)}$ and~${xy \in E(B)}$.


Let $G$ be a graph. A \defn{partition} of $G$ is a collection $\PP$ of sets of vertices in $G$ such that each vertex of $G$ is in exactly one element of $\PP$. Each element of $\PP$ is called a \defn{part}. Empty parts are allowed. The \defn{width} of $\PP$ is the maximum number of vertices in a part. The \defn{quotient} of $\PP$ (with respect to $G$) is the graph, denoted by \defn{$G/\PP$}, whose vertices are the non-empty parts in $\PP$, where distinct non-empty parts $A,B\in \PP$ are adjacent in $G/\PP$ if and only if some vertex in $A$ is adjacent in $G$ to some vertex in $B$. For a graph $H$, an \defn{$H$-partition} of $G$ is a partition $\PP=(\PP_x \subseteq V(G) :x\in V(H))$ of $G$ indexed by $V(H)$, such that for each edge $vw\in E(G)$, if $v\in \PP_x$ and $w\in \PP_y$ then $x=y$ or $xy\in E(H)$. That is, $G/\PP$ is contained in $H$. The following observation connects partitions and products.

\begin{obs}[\citep{DJMMUW20}]
\label{ObsPartitionProduct}
For all graphs $G$ and $H$ and any integer $p\geq 1$, $G$ is contained in $H\boxtimes K_p$ if and only if $G$ has an $H$-partition with width at most $p$.
\end{obs}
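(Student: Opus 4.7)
\textbf{Proof plan for \cref{ObsPartitionProduct}.} The statement is a bijective correspondence between subgraph embeddings of $G$ into $H\boxtimes K_p$ and $H$-partitions of $G$ of width at most $p$, so my plan is to prove each direction by describing the natural construction and then verifying the two defining properties (the width bound, and that the quotient is contained in $H$ / that adjacency is preserved).

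For the forward direction, I assume $G$ is contained in $H\boxtimes K_p$ via an injective map $\phi\colon V(G)\to V(H)\times[p]$ that sends edges of $G$ to edges of $H\boxtimes K_p$. For each $h\in V(H)$ I set $A_h:=\{v\in V(G):\phi(v)\in\{h\}\times[p]\}$ and let $\PP:=\{A_h:h\in V(H),\,A_h\neq\emptyset\}$. Injectivity of $\phi$ immediately gives $|A_h|\le p$, hence width at most $p$. To see that $G/\PP$ is contained in $H$, note that if $A_h$ and $A_{h'}$ are distinct parts that are adjacent in $G/\PP$, then some edge $vw\in E(G)$ has $\phi(v)=(h,i)$ and $\phi(w)=(h',j)$; since $h\ne h'$, the definition of the strong product forces $hh'\in E(H)$. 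Hence the map $A_h\mapsto h$ is an injective homomorphism from $G/\PP$ to $H$.

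For the reverse direction, I start with an $H$-partition $\PP$ of $G$ of width at most $p$ and an injective homomorphism $\iota\colon G/\PP\to H$ (which exists by definition of containment). For each part $A\in\PP$, I pick an arbitrary injection $\ell_A\colon A\to[p]$ (possible because $|A|\le p$), and define $\phi\colon V(G)\to V(H)\times[p]$ by $\phi(v):=(\iota(A),\ell_A(v))$ where $A$ is the part containing $v$. Injectivity of $\phi$ follows from injectivity of $\iota$ (vertices in distinct parts map to distinct first coordinates) and of each $\ell_A$ (vertices in the same part get distinct second coordinates). For an edge $vw\in E(G)$: if $v,w$ lie in the same part $A$ then $\phi(v)$ and $\phi(w)$ agree in the first coordinate and differ in the second, hence are adjacent in $H\boxtimes K_p$; otherwise $v\in A$ and $w\in A'$ with $A\ne A'$ adjacent in $G/\PP$, so $\iota(A)\iota(A')\in E(H)$, and regardless of whether the second coordinates coincide, $\phi(v)\phi(w)$ is an edge of $H\boxtimes K_p$ by the strong product definition.

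There is no real obstacle here: both directions are essentially definition-chasing, and the only mild subtlety is remembering that the strong product includes the ``$v=w$ and $xy\in E(B)$'' case, which is precisely what handles the edges inside a single part. I would present the two constructions in parallel so that the bijective nature of the correspondence is transparent.
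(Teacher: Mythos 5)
Your proof is correct: both directions are the standard definition-chasing verification, and you handle the one subtle point (edges within a single part correspond to the ``same $H$-vertex, adjacent $K_p$-indices'' case of the strong product) properly. The paper itself states this as an observation cited to \citet{DJMMUW20} without proof, and your argument is exactly the routine correspondence that the cited source uses, so there is nothing further to add.
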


A \defn{layering} of a graph $G$ is a partition $\PP$ of $G$, whose parts are ordered $\PP=(V_0,V_1,\dots)$ such that for each edge $vw\in E(G)$, if $v\in V_i$ and $w\in V_j$ then $|i-j|\leq 1$. Equivalently, a layering is a $P$-partition for some path $P$. Consider a connected graph $G$. Let $r\in V(G)$ and let $V_i:=\{v\in V(G):\dist_G(v,r)=i\}$ for each $i\geq 0$. 
Then $(V_0,V_1,\dots)$ is a \defn{\textsc{bfs}-layering} of $G$ rooted at $r$. Let $T$ be a spanning tree of $G$, where for each non-root vertex $v\in V_i$ there is a unique edge $vw$ in $T$ for some $w\in V_{i-1}$. Then $T$ is called a \defn{\textsc{bfs}-spanning tree} of $G$. (These trees are a superset of the trees that can be generated by the breadth-first search algorithm.)\ 

If $T$ is a tree rooted at a vertex $r$, then a non-empty path $P$ in $T$ is \defn{vertical} if the vertex of $P$ closest to $r$ in $T$ is an end-vertex of $P$. 

Many recent results show that certain graphs can be described as subgraphs of the strong product of a graph with bounded treewidth and a path \citep{DJMMUW20,DHHW22,DMW23,HW24,DEMWW22,HJMW24,UWY22}. For example, \citet{DHHW22} proved the following result (building on the work of \citet{DJMMUW20}).

\begin{lem}[\citep{DHHW22}]
\label{GenusPartition}
Every connected graph $G$ of Euler genus at most $g$ is contained in $H\boxtimes P \boxtimes K_{\max\{2g,3\}}$ for some planar graph $H$ with treewidth 3, and for some path $P$. In particular, for every rooted spanning tree $T$ of $G$, there is a planar graph $H$ with treewidth at most $3$ and there is an $H$-partition $\PP$ of $G$ such that each part of $\PP$ is a subset of the union of at most $\max\{2g,3\}$ vertical paths in $T$.
\end{lem}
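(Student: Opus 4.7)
The plan is to reduce \cref{GenusPartition} to its planar case ($g=0$), which is itself a strengthening of the planar product structure theorem of \citet{DJMMUW20}: every connected planar graph with a rooted spanning tree $T'$ admits a partition $\PP'$ such that the quotient is planar of treewidth at most $3$ and each part is a subset of a union of at most $3$ vertical paths in $T'$. Assume therefore that $g \ge 1$, so $\max\{2g,3\} = \max\{2g, 3\}$ is controlled by $2g$ plus or minus the planar constant.

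Fix an embedding of $G$ in a surface $\Sigma$ of Euler genus at most $g$, and let $T$ be the given rooted spanning tree of $G$. The first step is a topological surgery: I would produce a system of vertical paths $Q_1, \ldots, Q_s$ in $T$ together with non-tree edges $f_1, \ldots, f_s$ of $G$ (with $s \le 2g$) whose fundamental cycles $Q_i \cup \{f_i\}$ form a homology basis of $\Sigma$, so that cutting $\Sigma$ along these cycles reduces it to a disk. By duplicating the vertices of each $Q_i$ along the cut, one obtains a planar graph $G^+$ carrying a natural rooted spanning tree $T^+$ inherited from $T$. The naive bound on the number of vertical paths so produced is $4g$, because the tree path between the endpoints of a cotree edge consists of two vertical paths joined at a common ancestor; to bring this down to $2g$ one chooses the homology-basis cotree edges carefully so that their tree paths pair up into single vertical paths in $T$.

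Next, I would apply the planar case of the lemma to $(G^+, T^+)$ to obtain a partition $\PP^+$ of $G^+$ such that $G^+/\PP^+$ is planar with treewidth at most $3$ and each part of $\PP^+$ lies in a union of at most $3$ vertical paths of $T^+$. I would then lift $\PP^+$ to a partition $\PP$ of $G$ by identifying each duplicated vertex with its original and, crucially, absorbing each cutting path $Q_i$ into a single designated existing part. Since the $Q_i$ project to vertical paths in $T$ and there are at most $2g$ of them, each resulting part of $\PP$ lies in the union of at most $\max\{2g,3\}$ vertical paths of $T$. The quotient $G/\PP$ is obtained from $G^+/\PP^+$ by these vertex identifications plus the addition of the $s \le 2g$ cotree edges, whose endpoints lie inside the designated cutting-path parts; this keeps $G/\PP$ a minor of a planar graph of treewidth $3$, and hence itself planar of treewidth at most $3$. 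The first statement of the lemma then follows from the ``in particular'' conclusion via \cref{ObsPartitionProduct} applied to the BFS-layering determined by $T$.

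The main obstacle is attaining the precise count $\max\{2g, 3\}$. This requires both (i) a delicate topological construction yielding a homology basis whose fundamental cycles contribute only $2g$ (rather than $4g$) vertical paths in $T$, and (ii) a matching lifting scheme that absorbs the cut-path fragments and the cotree edges $f_i$ without inflating either the number of vertical paths per part or the treewidth of the quotient. Step (i) — exhibiting a homology basis whose fundamental cycles share tree paths sufficiently economically with respect to an arbitrary given rooted spanning tree — is the principal technical crux.
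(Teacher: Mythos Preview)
The paper does not prove \cref{GenusPartition}; it is quoted from \citet{DHHW22} and used as a black box in the proof of \cref{NewAlmostEmbeddable}. There is therefore no proof in this paper to compare your proposal against.

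As to the proposal itself: it is a plan rather than a proof, and you explicitly flag its main gap yourself --- producing a homology basis whose fundamental cycles contribute only $2g$ (not $4g$) vertical paths with respect to an \emph{arbitrary} given rooted spanning tree $T$. You do not resolve this. Moreover, the lifting step as you describe it does not yield the stated bound: if you ``absorb each cutting path $Q_i$ into a single designated existing part'', that part already uses up to $3$ vertical paths from the planar step, so absorbing even one $Q_i$ pushes it to $4$ paths when $g=1$, and absorbing several gives something like $2g+3$ paths for that part rather than $\max\{2g,3\}$. The whole point of the $\max\{2g,3\}$ bound (improving on the additive $2g+3$--type bounds in earlier work) is that the cut paths and the planar-step paths are \emph{not} simply added; your sketch does not explain how the addition is avoided. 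Finally, in your last sentence you derive the product form from the partition form ``via the BFS-layering determined by $T$'', but in the second sentence of the lemma $T$ is an arbitrary rooted spanning tree; to deduce the first sentence one must separately take $T$ to be a BFS spanning tree so that each vertical path meets each BFS layer in at most one vertex.
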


\section{\Large Proof of \cref{ConstantTwSqrtn}}
\label{TheProof}

This section proves \cref{ConstantTwSqrtn} for $K_t$-minor-free graphs, where the Graph Minor Structure Theorem is our main tool. We first prove an analogue of \cref{ConstantTwSqrtn} for almost-embeddable graphs with several additional properties that will be needed later. 

\begin{lem}\label{NewAlmostEmbeddable}
For integers $g,p,a\geq 0$ and $k,n\geq 1$ and $d\geq 4$, for every $(g,p,k,a)$-almost embeddable $n$-vertex graph $G$ with apex set $A$, there exists a set $S\subseteq V(G)$ where $|S|\leq \frac{n}{d-3}+a$ such that $G-S$ has an $H$-partition with width at most $(2g+4p+3)(2\sqrt{(k+1)n}+d+2k+2)$, where $H$ is planar with treewidth at most $3$. Moreover, $A\subseteq S$ and any clique in a vortex of $G$ is contained in at most two parts.
\end{lem}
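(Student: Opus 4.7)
The plan is to treat the three structural ingredients of the almost-embeddable graph---apex vertices, the surface-embedded part $G_0$, and the $p$ vortices---in sequence, combining \cref{GenusPartition} applied to an augmented version of $G_0$ with a direct absorption of the vortices. First I would place the apex set $A$ into $S$, which accounts for the $+a$. For each vortex $G_i$ inside disc $D_i$ with boundary vertices $x_1^i, \dots, x_{b_i}^i$, I would augment $G_0$ to $G_0^+$ by adding the boundary cycle $x_1^i x_2^i \cdots x_{b_i}^i x_1^i$ drawn just inside $D_i$; these edges can be added without crossings so $G_0^+$ remains embedded in $\Sigma$ of Euler genus at most $g$. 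Handling each component of $G_0^+$ separately, I may assume $G_0^+$ is connected, root it at an arbitrary vertex, and run BFS to obtain layers $V_0, V_1, \dots$ and a BFS spanning tree $T$.

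The rest of $S$ would be chosen by grouping the BFS layers into consecutive blocks of $d-3$ layers and adding the minimum-size layer of each block to $S$. An averaging argument over disjoint blocks then gives $|S \setminus A| \le n/(d-3)$, as required, and each connected slab of $G_0^+ - S$ has BFS depth at most $d$. Next, I would apply \cref{GenusPartition} to $G_0^+$ with $T$ to obtain a partition $\PP_0$ whose quotient is planar of treewidth at most $3$, with each part a union of at most $\max\{2g,3\} \le 2g+3$ vertical paths in $T$.

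I would then extend $\PP_0$ to a partition $\PP$ of $V(G - A)$ by placing each non-boundary vortex vertex $v \in B_j$ into the part containing $x_j^i$. Since the boundary cycle of each vortex lifts in $T$ to at most two vertical arcs meeting at their lowest common ancestor, absorbing each vortex adds at most $4$ extra vertical paths per part, giving at most $2g + 4p + 3$ vertical paths per part. Each vertical path contains at most $d$ vertices of $G_0^+$ from the slab depth, plus vortex contributions of at most $2k$ per BFS layer absorbed; a further averaging argument bounds the total vortex contribution along a single vertical path by $2\sqrt{kn} + 2k$, yielding the claimed width bound $(2g+4p+3)(2\sqrt{kn}+d+2k)$. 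The ``moreover'' clause follows because any clique in a vortex $G_i$ spans at most two consecutive bags $B_j, B_{j+1}$ of its path decomposition and so lies in at most two parts.

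The hard part will be the vortex absorption: showing both that each vortex contributes only four additional vertical paths per part (via the lowest-common-ancestor structure of its boundary cycle in $T$), and that the vortex-induced length along a single vertical path is $O(\sqrt{kn}+k)$. The $2\sqrt{kn}$ term reflects the trade-off between choosing a denser BFS-layer separator and absorbing long vortex spans, and this delicate interaction between layer sizes and vortex boundary lengths is where the bulk of the work lies.
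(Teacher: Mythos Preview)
Your plan has a genuine gap at the vortex-absorption step. The claim that ``the boundary cycle of each vortex lifts in $T$ to at most two vertical arcs meeting at their lowest common ancestor'' is false for BFS trees in general. A BFS tree rooted at an arbitrary vertex of $G_0^+$ may reach the boundary cycle of $D_i$ from many different directions, scattering $x_1^i,\dots,x_{b_i}^i$ across an unbounded number of branches; adding the cycle edges only forces consecutive $x_j^i,x_{j+1}^i$ to lie in adjacent BFS layers, not on a common vertical path. Consequently the ``adds at most $4$ extra vertical paths per part'' accounting is unjustified, and so is the appearance of the term $4p$ in the width bound. Relatedly, the ``further averaging argument'' that is supposed to give $2\sqrt{kn}+2k$ vortex contribution along a single vertical path is never specified, and there is no evident mechanism in your setup that would produce it.

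The paper handles both issues simultaneously by a preprocessing step you are missing. For each vortex it first \emph{chunks} the path-decomposition greedily: it selects indices $a_1,\dots,a_q$ so that $Z_i:=\bigcup_j B_{a_j}$ and each intermediate block $Y_{i,j}$ have size at most $\sqrt{kn}+k$. It then \emph{contracts} the boundary path accordingly, so that in the modified $G'_0$ each vortex is represented by a single vertex $z_i$ together with the chunk vertices $y_{i,j}$. Crucially, it adds a new root $r$ adjacent to all $z_i$ (via at most $p-1$ handles, which is where the genus increases to $g+2p$ and the $4p$ in the bound really comes from). This forces every $z_i$ into BFS layer $1$ and every $y_{i,j}$ into layer $2$, so each vertical path in $T$ contains at most two special vertices; un-contracting then contributes at most $2(\sqrt{kn}+k)$ per vertical path, yielding the stated width. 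Your approach keeps the genus at $g$ and tries to recover the $4p$ from the absorption step, but without the contraction-and-root trick there is no control over how boundary vertices are distributed among vertical paths. The ``moreover'' clause also relies on this chunking: a clique in $G_i$ lies in a single bag and hence in $Y_{i,j}\cup Z_i$ for some $j$, i.e.\ in at most two parts; your assignment rule ``place $v\in B_j$ into the part of $x_j^i$'' is ambiguous when $v$ lies in several bags and does not give this conclusion.
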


\begin{proof}
Let $G_0,G_1,\dots,G_p$ and $D_1,\dots,D_p$ be as in the definition of $(g,p,k,a)$-almost embeddable. Let $G'_0$ be obtained from $G_0$ as follows. Initialise $G'_0:=G_0$ and add edges to $G'_0$ so that it is connected and is still embedded in the same surface as $G_0$, and $D_1,\dots,D_p$ are $G'_0$-clean.

For each $i\in [p]$, modify $G_0'$ as follows. Say
the vertices around $D_i$ are $x_1,\dots,x_b$. In $G_0'$, add edges so that $(x_1,\dots,x_b)$ is a path, and add a vertex $z_i$ into the disc $D_i$ adjacent to $x_1,\dots,x_b$. Note that since $D_i$ was initially $G_0'$-clean for each $i\in [p]$ and $D_1,\dots, D_p$ are pairwise disjoint, this can be done while maintaining an embedding of $G_0'$ in the same surface as $G_0$. 

Now apply the following operation for each $i\in [p]$. Let $(B_1,\dots,B_b)$ be a $D_i$-vortex of~$G_i$ with width at most $k$, where $x_j \in B_j$ for each $j\in[b]$. Greedily find an increasing sequence of integers $a_1,\dots,a_{q+1}$ so that $a_1=1$, $a_{q+1}=b+1$, and for each $j\in [q]$, if $Z_i:=B_{a_1}\cup B_{a_2} \cup \dots\cup B_{a_q}$ and $Y_{i,j}:=(B_{a_j+1}\cup B_{a_j+2}\cup\dots\cup B_{a_{j+1}-1})\setminus Z_i$ 
, then 
$\left\lceil\sqrt{(k+1)n}\right\rceil \leq |Y_{i,j}| \leq \left\lceil\sqrt{(k+1)n}\right\rceil+k$
for each $j\in[q-1]$ and
$|Y_{i,q}| \leq \left\lceil\sqrt{(k+1)n}\right\rceil+k$. Note that $n \geq (q-1) \sqrt{(k+1)n}$, so $|Z_i|\leq (k+1)q\leq
(k+1) ( n / \sqrt{(k+1)n} + 1) = \sqrt{(k+1)n} + k+1$. 


Every clique in $G_i$ is contained in $Y_{i,j}\cup Z_i$ for some $j\in [q]$. In $G'_0$ contract the path $(x_{a_j+1},x_{a_j+2},\dots,x_{a_{j+1}-1})$ into a vertex $y_{i,j}$, for each $j\in [q]$. In $G'_0$ contract the edge $z_ix_{a_j}$ into $z_i$ for each $j\in [q]$. Call the vertices $y_{i,j}$ and $z_i$ of $G'_0$ \defn{special}. 

For each $i\in[p]$, let $F'_i$ be some face of $G'_0$ incident to $z_i$. If $p=0$ then add a vertex $r$ to $G'_0$ adjacent to some vertex of $G_0$. If $p\geq 1$ then for each $i\in[p-1]$, add a handle to the surface in which $G'_0$ is embedded between $F'_i$ and $F'_{i+1}$. The resulting embedding of $G'_0$ has a single face $F'$ incident to each of $z_1,\ldots,z_p$. Add a vertex $r$ to $G'_0$ adjacent to $z_1,\dots,z_p$. Embed $r$ and the edges incident to $r$ in $F'$. Note that (for any value of $p$) the resulting surface has Euler genus at most $g+2\max\{0,p-1\}\le g+2p$. 

Let $T$ be a \textsc{bfs}-spanning tree of $G'_0$ rooted at $r$ (which exists since $G'_0$ is connected). Let $(V_0,V_1,\dots)$ be the corresponding \textsc{bfs}-layering of $G'_0$. So $V_0=\{r\}$, and if \(p \ge 1\) then $V_1 = \{z_1, \ldots, z_p\}$ and $V_2$ contains all $y_{i,j}$ vertices (possibly plus others). By \cref{GenusPartition} there is an $H'$-partition $\PP'$ of $G'_0$ where $H'$ is planar with treewidth at most $3$ such that each part of $\PP'$ is a subset of the union of at most $\max\{2g+4p,3\}\leq 2g+4p+3$ vertical paths in $T$. Note that each vertical path in $T$ has at most two special vertices (some $z_i$ and some $y_{i,j}$). 
	 
For each $i\in [3,d-1]$, let $\widehat{V}_i:= V_i \cup V_{i+d}\cup V_{i+2d}\cup\cdots$. Since $|\widehat{V}_3|+|\widehat{V}_4|+\dots+|\widehat{V}_{d-1}| \leq n$, there exists $\ell\in [3,d-1]$ such that $|\widehat{V}_\ell|\leq n/(d-3)$. Let $S:=\widehat{V}_\ell\cup A$. Then $|S| \leq n/(d-3)+a$.
	
Let \(V_i := \emptyset\) for \(i < 0\), and for any integer $j\geq 0$, let $\PP_j'$ be the $H_j'$-partition of $G'_0[V_{\ell+(j-1)d+1}\cup \dots\cup V_{\ell+jd-1}]$ induced by $\PP'$, where $H_j'$ is a copy of $H'$ (and $H_0',H_1',\dots$ are pairwise disjoint). Then $\PP_j'$ has width at most $(2g+4p+3)d$.
	
Let $H$ be the disjoint union of $H_0', H_1', \dots$. Then $H$ is planar with treewidth at most $3$. Now $\PP_0'\cup\PP_1'\cup\dots$ is an $H$-partition of $G'_0-S$ where each part is a subset of the union of at most \((2g+4p+3)\) vertical paths of length at most \(d-1\) in \(T\). Hence, the width of this partition is smaller than $(2g+4p+3)d$.
	
We now modify this partition of $G'_0-S$ into a partition of $G-S$. By construction (since $\ell\geq 3$), $\PP_0'$ is a partition of $G'_0[V_0\cup V_1\cup V_2\cup\dots\cup V_{\ell-1}]$. In particular, each vertex $y_{i,j}$ (which is in $V_2$) is in some part $X$ of $\PP_0'$. Replace $y_{i,j}$ in $X$ by $Y_{i,j}$. Similarly, each vertex $z_{i}$ (which is in $V_1$) is in some part $X$ of $\PP_0'$. Replace $z_i$ in $X$ by $Z_i$. Remove $r$ from the part of $\PP_0'$ that contains $r$. This defines an $H$-partition $\PP$ of $G-S$ where every clique in a vortex of $G$ is contained in at most two parts. 
	
It remains to bound the width of $\PP$. Let $X\in \PP$. If $X$ comes from $\PP_j'$ for some $j\geq 1$, then $|X|\leq (2g+4p+3)d$. Now suppose $X$ comes from $\PP_0'$. Each vertical path in $T$ has at most two special vertices (some $z_i$ and some $y_{i,j}$). The corresponding replacements contribute at most \(2\sqrt{(k+1)n}+2k+2\) vertices to $X$. Since $X$ corresponds to the union of at most $2g+4p+3$ vertical paths (before replacement) in $T$, 
\begin{align*}
|X|
&\leq (2g+4p+3)d + (2g+4p+3)(2\sqrt{(k+1)n}+2k+2)\\
&= (2g+4p+3)(2\sqrt{(k+1)n}+d+2k + 2).    
\end{align*}
So $\PP$ has width at most $(2g+4p+3)(2\sqrt{(k+1)n}+d+2k + 2)$, as required. 
\end{proof}

To handle tree-decompositions we need the following standard separator lemma. For a tree $T$ rooted at $r\in V(T)$, the root of a subtree $T'$ of $T$ is the vertex in $V(T')$ that is closest to $r$. A \defn{weighted tree} is a tree $T$ together with a weighting function $\gamma:V(T)\to \RR^+$. The \defn{weight} of a subtree $T'$ of $T$ is $\sum_{v\in V(T')}\gamma(v)$. 

\begin{lem}
\label{TreeSep}
For every integer $q\geq 0$ and $n\in\RR^+$, every weighted tree $T$ with weight at most $n$ has a set $Z$ of at most $q$ vertices such that each component of $T-Z$ has weight at most $\frac{n}{q+1}$. 
\end{lem}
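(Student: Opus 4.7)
I would prove this by iteratively deleting a single vertex at a time, choosing each deletion so that a suitable potential function drops by at least $n/(q+1)$.

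First, root $T$ at an arbitrary vertex and initialize $Z := \emptyset$. At each step, if some component $C$ of $T-Z$ has weight greater than $n/(q+1)$ (call such a component \emph{heavy}), I would select a \emph{lowest} vertex $v \in V(C)$ such that the subtree of $C$ rooted at $v$ has weight greater than $n/(q+1)$, and add $v$ to $Z$. Such a vertex exists because the root of $C$ itself witnesses the property (as $C$ is heavy), and by choosing a deepest such vertex, every child of $v$ within $C$ has subtree-weight at most $n/(q+1)$. Let $f_C(v)$ denote the weight of the subtree of $C$ rooted at $v$, so $f_C(v) > n/(q+1)$.

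Next I would analyse the effect of adding $v$ to $Z$. Removing $v$ breaks $C$ into (i) the subtrees of $C$ rooted at the children of $v$, each of weight at most $n/(q+1)$ and hence not heavy, and (ii) at most one ``upper'' piece consisting of the ancestor part of $C$ with $v$ deleted, whose weight is $w(C) - f_C(v) < w(C) - n/(q+1)$. Now define the potential $\Phi$ as the total weight of all heavy components of $T-Z$. Before the step, $C$ contributes $w(C)$ to $\Phi$; after the step, the only possibly heavy component from the split contributes at most $w(C) - f_C(v)$, so $\Phi$ decreases by more than $n/(q+1)$ in each iteration. Initially $\Phi \le n$, so after at most $q$ iterations $\Phi < n - q\cdot n/(q+1) = n/(q+1)$. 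Since any single heavy component would by itself contribute more than $n/(q+1)$ to $\Phi$, no heavy component remains. Hence $|Z| \le q$ and every component of $T-Z$ has weight at most $n/(q+1)$.

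The only delicate point is verifying the potential decrease: I must be sure that after removing $v$, no newly created component other than the upper piece can be heavy. That is exactly what the choice of $v$ as a deepest heavy-producing vertex guarantees, by capping each child subtree at $n/(q+1)$. Once this is in hand, the accounting is immediate, so I expect no real obstacle beyond carefully picking $v$.
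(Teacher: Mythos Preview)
Your argument is correct and rests on the same key idea as the paper's proof: repeatedly pick a deepest vertex whose rooted subtree has weight exceeding $n/(q+1)$, so that deleting it leaves only light child subtrees and a strictly lighter ``upper'' piece. The paper packages this as an induction on $q$ (peeling off one heavy subtree $T_w$ and applying the hypothesis to $T-V(T_w)$, with a separate centroid argument for the base case $q=1$), whereas you unroll it into a greedy iteration governed by the potential $\Phi$ equal to the total weight of heavy components. Your presentation is slightly more uniform, since the potential bookkeeping handles all $q$ at once and no separate base case is needed; the paper's inductive version is marginally shorter because it does not need to track $\Phi$ explicitly. Substantively the two proofs are the same.
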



\begin{proof}
We proceed by induction on $q$. The $q=0$ case holds trivially with $Z=\emptyset$. Now assume that $q\geq 1$ and the result holds for $q-1$. Root $T$ at an arbitrary vertex $r$. For each vertex $v$, let $T_v$ be the maximal subtree of $T$ rooted at $v$. Let $v$ be a vertex in $T$ furthest from $r$ such that $T_v$ has weight greater than $\frac{n}{q+1}$.  (If no such $v$ exists, then $T$ has weight at most $\frac{n}{q+1}$ and $Z=\emptyset$ satisfies the claim). Let $T':=T-V(T_v)$. So $T'$ has weight at most $\frac{qn}{q+1}$. By induction, $T'$ has a set $Z'$ of at most $q-1$ vertices such that each component of $T'-Z'$ has weight at most $\frac{n}{q+1}$. Let $Z := Z' \cup  \{v\}$. By the choice of $v$, each component of $T_v-v$ has weight at most $\frac{n}{q+1}$. Thus each component of $T-Z$ has weight at most $\frac{n}{q+1}$.
\end{proof}

The next lemma handles tree-decompositions. 

\begin{lem}\label{CliqueSumNew}
    Let $a,b,k,n,w\geq 1$ be integers, and let $G$ be an $n$-vertex graph that has a rooted tree-decomposition $(B_x:x\in V(T))$ of adhesion at most $k$ such that for each $x\in V(T)$ there exists $S_x\subseteq B_x$ such that:
    \begin{itemize}
        \item $|S_x|\leq |B_x|/\sqrt{n}+a$ 
        \item $\torso{G}{B_x}-S_x$ has a $J_x$-partition $\PP_x$ of width at most $b$ where $\tw(J_x)\leq w$; and
        \item for every child-adhesion clique $C$ of $\torso{G}{B_x}$, the set $C\setminus S_x$ is contained in at most $w$ parts in $\PP_x$.
    \end{itemize}
     Then $G$ has an $H$-partition of width at most $\max\{b,(a+2k+1)\ceil{\sqrt{n}}\}$ such that $\tw(H)\leq w+1$. Moreover, $H$ contains a vertex $\alpha$ such that $\tw(H-\alpha)\leq w$.
\end{lem}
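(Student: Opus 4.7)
The plan is to construct the partition $\PP$ by stitching the local $J_x$-partitions together along the tree $T$, using Lemma~\ref{TreeSep} to pick out a small set of ``separator'' bags whose $S$-vertices and associated adhesion vertices will constitute the single big part $\alpha^*$ corresponding to the apex vertex $\alpha$ of $H$. First I would root $T$ at an arbitrary vertex $r$, and for each non-root $x$ define the adhesion with the parent $A_x := B_x \cap B_{p(x)}$ (so $|A_x| \leq k$) and the \emph{native} set $B^\circ_x := B_x \setminus A_x$; set $B^\circ_r := B_r$. After pruning any bags with $B^\circ_x = \emptyset$, each vertex of $G$ has a unique topmost bag, so $\{B^\circ_x\}_{x \in V(T)}$ partitions $V(G)$ and $\sum_x |B^\circ_x| = n$.

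Next I would apply Lemma~\ref{TreeSep} to $T$ with weights $\gamma(x) := |B^\circ_x|$ and $q := \lceil \sqrt{n}\rceil$ to obtain a set $Z \subseteq V(T)$ with $|Z| \leq \sqrt{n}$ such that every component of $T - Z$ has total weight at most $\sqrt{n}$; in particular, $|B^\circ_x| \leq \sqrt{n}$ for every $x \notin Z$, so $|B_x| \leq \sqrt{n}+k$ and $|S_x| \leq 1 + a + k/\sqrt{n}$. The partition $\PP$ is then defined by taking, for each $x \in V(T)$ and each $P \in \PP_x$, the non-empty set $P \cap B^\circ_x$ as a part of $\PP$ (of size at most $b$); all remaining vertices---the sets $S_x \cap B^\circ_x$ together with certain adhesion vertices on the boundary of $Z$---are collected into $\alpha^*$. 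The $Z$-contribution is controlled by
\[
\sum_{x \in Z} |S_x| \;\leq\; \sum_{x \in Z}\!\left(\tfrac{|B_x|}{\sqrt{n}} + a\right) \;\leq\; \tfrac{1}{\sqrt{n}}\sum_{x \in Z}\bigl(|B^\circ_x| + k\bigr) + a|Z| \;\leq\; (a+1)\sqrt{n} + k,
\]
while adhesions incident to $Z$ add at most $2k\sqrt{n}$, and the tiny $S_x$-contributions for $x \notin Z$ can be absorbed similarly; so $|\alpha^*| \leq (a+2k+1)\sqrt{n}$.

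Finally I would verify the treewidth bounds. For each $x \in V(T)$, the subgraph of $H - \alpha$ induced by the parts coming from $\PP_x$ is a subgraph of $J_x$, and so has treewidth at most $w$. These local pieces glue into a tree-decomposition of $H - \alpha$ of width at most $w$ along $T$: for each edge $xy$ of $T$, the child-adhesion hypothesis says that $B_x \cap B_y$ meets at most $w$ parts of $\PP_x$, and this set of $\leq w$ parts is a clique in $J_x$ that sits inside a single bag of $J_x$'s tree-decomposition, which enables a standard clique-sum style gluing. Adding $\alpha$ to every bag of the resulting tree-decomposition then gives $\tw(H) \leq w+1$, with $\tw(H-\alpha) \leq w$. \textbf{The main obstacle} will be coordinating the definition of $\alpha^*$ so that $|\alpha^*| \leq (a+2k+1)\sqrt{n}$ while the other parts stay within $\max\{b, (a+2k+1)\sqrt{n}\}$: the key is to use the separator structure of Lemma~\ref{TreeSep} to restrict the accumulation of $S$-vertices to $Z$-bags and their boundary adhesions, and to exploit the smallness of $|S_x|$ for $x \notin Z$ (which follows from the small-weight components of $T - Z$) to absorb the remaining contributions without inflating either the width of $\PP$ or the treewidth of $H$.
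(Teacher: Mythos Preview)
Your setup (native sets $B^\circ_x$, weighting $T$ by $|B^\circ_x|$, and invoking Lemma~\ref{TreeSep}) matches the paper's, but the construction of $\alpha^*$ has a real gap. You place $S_x\cap B^\circ_x$ into $\alpha^*$ for \emph{every} $x\in V(T)$ and then assert that ``the tiny $S_x$-contributions for $x\notin Z$ can be absorbed similarly''. This cannot be justified: the hypothesis only guarantees $|S_x|\le |B_x|/\sqrt{n}+a$, so even when $|B^\circ_x|\le\sqrt{n}$ you merely get $|S_x|\le a+2$, and nothing forces $S_x$ to be empty. Since $|V(T)\setminus Z|$ may be $\Theta(n)$, the sum $\sum_{x\notin Z}|S_x\cap B^\circ_x|$ can be $\Theta(an)$ rather than $O(\sqrt{n})$. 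Concretely, let $G$ be an edgeless $n$-vertex graph, $T$ a path with $n$ nodes, each $B_x$ a distinct singleton, and $S_x=B_x$ (permitted since $1\le 1/\sqrt{n}+a$); then your $\alpha^*$ is all of $V(G)$.

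The paper avoids this by using the local $J$-partition \emph{only} at the separator nodes $z\in Z$. For each component $T'$ of $T_z-z$ (whose total native weight is at most $\sqrt{n}$ by Lemma~\ref{TreeSep}) the entire vertex set $\bigcup_{x\in V(T')}B^\circ_x$ is placed into a \emph{single} part of size at most $\sqrt{n}$. The neighbourhood of that part inside $B_z$ lies in a child-adhesion clique of $\torso{G}{B_z}$, hence in at most $w$ parts of $\PP_z$, which form a clique of $J_z$; attaching one leaf bag of size $\le w+1$ therefore preserves $\tw(H_z)\le w$. Only $\bigcup_{z\in Z}S_z$ together with the parent-adhesions $\bigcup_{z\in Z}X_z$ go into the apex part, and both are $O(\sqrt{n})$ because $|Z|\le\sqrt{n}$. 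This ``collapse each light subtree into one part'' step is precisely what your proposal is missing. It also sidesteps a second difficulty in your gluing: the child-adhesion hypothesis controls how $A_x=B_x\cap B_y$ sits inside the \emph{parent's} partition $\PP_y$, but says nothing about $\PP_x$, so attaching all of (the restriction of) $J_x$ below the $\le w$ interface clique in $J_y$ would in general force you to add that clique to every bag of the tree-decomposition of $J_x$, pushing the width to roughly $2w$ rather than $w$.
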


\begin{proof}
Let $r\in V(T)$ be the root of $T$. For every node $x\in V(T)$ with parent $y$, let $X_x:=B_x\cap B_y$ (where $X_r=\emptyset$) and let $B_x':=B_x-X_x$. For each node $x\in V(T)$, let $\gamma(x)=|B_{x}'|$. Observe that $({B}'_x \colon x \in V(T))$ is a partition of $V(G)$, so the total weight equals $n$. By \cref{TreeSep} with $q:=\ceil{\sqrt{n}}-1$, there is a set $Z'\subseteq V(T)$ where $|Z'|\leq q$ such that each component of $T-Z'$ has total weight at most $\frac{n}{q+1}\leq \sqrt{n}$. Let $Z:=Z'\cup \{r\}$. For each $z\in Z$, let $T_z$ be the maximal subtree of $T$ rooted at $z$ such that $T_z\cap Z=\{z\}$.
Let $Q:= \bigcup (X_{z}: z \in Z)$ and observe that $|Q|\leq k(q+1) \leq k\sqrt{n}$. For each $z \in Z$, let $G_z:=G[\bigcup(B_x': x \in V(T_z))]-X_z$. If there is an edge $xy\in E(T)$, where $y$ is the parent of $x$, and $x\in V(T_z)$ and $y\in V(T_{z'})$ for some distinct $z,z'\in Z$, then $x=z$. Thus $G-Q$ is the disjoint union of $(G_z\colon z\in Z)$.  

\begin{claim}
    For each $z \in Z$, there exists $S_z\subseteq V(G_z)$ where $|S_z|\leq |B_{z}|/\sqrt{n}+a$ such that $G_z-S_z$ has an $H_z$-partition of width at most $\max\{b,\sqrt{n}\}$ for some graph $H_z$ with treewidth at most $w$.
\end{claim}
  
\begin{proof}
    Let $T_1',\dots,T_f'$ be the components of $T_z-z$. For each $j\in [f]$, let $C_j$ be the subgraph of $G_z$ induced by $(B_x'\colon x\in V(T_j'))$. Note that $V(G_z)$ is the disjoint union of $B_z',V(C_1),\dots,V(C_f)$. By \cref{TreeSep}, $|V(C_j)|\leq |\bigcup(B_x' \colon x\in V(T_j'))|= \gamma(T_j')\leq \sqrt{n}$. By assumption, there is a set $S_z\subseteq B_{z}$ where $|S_z|\leq |B_{z}|/\sqrt{n}+a$ such that $\torso{G}{B_{z}}-S_z$ has a $J_z$-partition $\PP_z'$ with width at most $b$ where $\tw(J_z)\leq w$, and for every child-adhesion clique $C$ in $\torso{G}{B_{z}}$, $C \setminus S_{z}$ is contained in at most $w$ parts in $\PP_z'$. Let $(W_x^{(z)}\colon x\in V(T^{(z)}))$ be a tree-decomposition of $J_z$ with width at most $w$. Add $V(C_1),\dots, V(C_f)$ to the partition $\PP_z'$ to obtain a partition $\PP_z$ of $G_z-S_z$ with quotient $H_z$.     
    Then $\PP_z$ has width at most $\max\{b,\sqrt{n}\}$. For each $j\in [f]$, let $\alpha_j\in V(H_z)$ be the vertex that indexes $V(C_j)$ and let $N_j$ be the neighbourhood of $\alpha_j$. Since the neighbourhood of $C_j$ in $G_z$ is a child-adhesion clique of $\torso{G}{B_{z}}$, it follows that $N_j$ is a clique in $J_z$ of size at most~$w$. Thus there is a node $x\in V(T^{(z)})$ such that $N_j\subseteq W_x^{(z)}$. Add a leaf node $\ell$ adjacent to $x$ and let $W_{\ell}^{(z)}:=N_j\cup \{\alpha_j\}$. Repeat this procedure for all $j\in [f]$ to obtain a tree-decomposition of $H_z$ with width at most $w$.
\end{proof}
 Observe that 
 \[\sum_{z\in Z} |B_{z}|\leq 
 \sum_{z\in Z}(|B_z'|  + |X_z|) =
 \big(\sum_{z\in Z}|B_z'| \big) + \big(\sum_{z\in Z} |X_z| \big)
 \leq n+k|Z|.\] 
Since $|Q|\leq k|Z|$ and $|Z|\leq q+1= \ceil{\sqrt{n}}$,
\begin{align*}
    |Q\cup (\bigcup S_z:z\in Z)| 
     \leq |Q| + \sum_{z\in Z} \big(|B_{z}|/\sqrt{n}+a\big) 
&  \leq  (k+a)|Z| + (n+k|Z|)/\sqrt{n} \\
& <  (2k+a+1)\ceil{\sqrt{n}} .
     \end{align*}
Let $H$ be the graph obtained from the disjoint union of $(H_z \colon z\in Z)$ by adding one dominant vertex $\alpha$. So $\tw(H-\alpha)\leq w$ and $\tw(H)\leq w+1$. By associating $Q\cup (\bigcup S_z:z\in Z)$ with $\alpha$, we obtain an $H$-partition of $G$ with width at most $\max\{b,(a+2k+1)\ceil{\sqrt{n}}\}$.
\end{proof}

\begin{proof}[Proof of \cref{ConstantTwSqrtn}.]
Let $G$ be an $n$-vertex $K_t$-minor-free graph. By \cref{GMSTimproved}, $G$ has a rooted tree-decomposition $(B_x:x\in V(T))$, such that for each $x\in V(T)$, the torso $\torso{G}{B_x}$ is $k$-almost-embeddable (for some $k=k(t)$), and if $A_x$ is the apex-set of $\torso{G}{B_x}$, then for every child-adhesion clique $C$ of $\torso{G}{B_x}$, either $C\setminus A_x$ is contained in a vortex of $\torso{G}{B_x}$, or $|C\setminus A_x|\leq 3$. \citet[Lemma 21]{DMW17} showed that every clique in a $k$-almost-embeddable graph has at most $9k$ vertices. So the adhesion of $(B_x:x\in V(T))$ is at most $9k$. We may assume that $n\geq k$. By \cref{NewAlmostEmbeddable} with $d:=\ceil{\sqrt{n}}+3$, for each torso $\torso{G}{B_x}$ there exists a set $S_x\subseteq B_x$ such that $|S_x|\leq \frac{|B_x|}{\ceil{\sqrt{n}}}+k \leq \frac{|B_x|}{\sqrt{n}}+k$ and $\torso{G}{B_x}-S_x$ has a $J_x$-partition $\PP_x$, where $\tw(J_x)\leq 3$
and the width of \(\PP_x\) is at most $(2k+4k+3)(2\sqrt{(k+1)n}+\ceil{\sqrt{n}}+3+2k+2) \leq (6k+3)\cdot9\sqrt{(k+1)n}$ (because \(n \ge k \ge 1\)). 

Moreover, $A_x\subseteq S_x$ and any clique in a vortex of $\torso{G}{B_x}$ is contained in at most two parts in $\PP_x$. As such, for every child-adhesion clique $C$ of $\torso{G}{B_x}$, $C\setminus S_x$ is contained in at most three parts of $\PP_x$. By \cref{CliqueSumNew}, $G$ has an $H$-partition with width at most 
$m:= \max\{(6k+3)\cdot9\sqrt{(k+1)n},(k+18k+1)\ceil{\sqrt{n}}\}\leq (6k+3)\cdot9\sqrt{(k+1)n}$, 
where $H$ contains a vertex $\alpha$ such that $\tw(H-\alpha)\leq 3$. It therefore follows from  \cref{ObsPartitionProduct} that $G$ is contained in $H\boxtimes K_{\floor{m}}$ where $\tw(H)\leq 4$.
\end{proof}

\section{\Large Proof of \cref{K3t}}

This section proves \cref{K3t} for $K_{3,t}$-minor-free graphs, where we assume throughout that $t\geq 1$. 
We use the following extremal function for $K^*_{3,t}$-minor-free graphs by \citet{KP10}. Here $K^*_{3,t}$ is the graph obtained from $K_{3,t}$ by adding an edge between each pair of vertices in the side of the bipartition with three vertices.

\begin{lem}[\citep{KP10}]
\label{extremal}
Every $K^*_{3,t}$-minor-free graph $G$ satisfies $|E(G)|\leq \alpha t \,|V(G)|$, for some constant $\alpha\geq1$.
\end{lem}

The following notation will be useful in the proof of \cref{K3t}. For a graph $G$, an induced subgraph $C$ of $G$, and sets $X,Y\subseteq V(G)$ such that $X,Y,V(C)$ are pairwise disjoint, let \defn{$\kappa_G(X,C,Y)$} be the maximum number of vertex-disjoint paths in $C$, each with an endpoint in $N_G(X)\cap C$ and an endpoint in  $N_G(Y)\cap C$. By Menger's Theorem there is a set $S\subseteq V(C)$ of size $\kappa_G(X,C,Y)$ separating $N_G(X) \cap C$ and $N_G(Y)\cap C$ in $C$. If $X=\{x\}$ then replace $X$ by $x$ in this notation, and similarly for $Y$. 

The following lemma is the key to the proof of \cref{K3t}. Here $\alpha$ is from  \cref{extremal}.  
\begin{lem}
\label{new}
Let $G$ be a $K^*_{3,t}$-minor-free graph on $n$ vertices. Let $X$ and $Y$ be disjoint non-empty sets of vertices in $G$ such that $G[X]$,  $G[Y]$ and $G[X\cup Y]$ are connected. 
Then there is a set $S\subseteq V(G-X-Y)$ such that:
\begin{itemize}
\item $|N_G[S]| \leq t\sqrt{3\alpha n}$, 
\item $\kappa_G(X,C,Y) \leq t\sqrt{3\alpha n}$ for every component $C$ of $G-X-Y-S$, and
\item $G[X\cup S]$ and $G[Y\cup S]$ are connected.
\end{itemize}
\end{lem}

\begin{proof}
Let $Q_1,\dots,Q_m$ be a maximum-size set of vertex-disjoint paths in $G-X-Y$ between $N_G(X)\setminus Y$ and $N_G(Y)\setminus X$. 
If $m=0$, then the lemma holds trivially with $S=\varnothing$, thus we may assume $m\geq 1$. 
Define $J$ to be the auxiliary graph with vertex set $\{q_1,\dots,q_m\}$ where $q_iq_j\in E(J)$ whenever there is a path in $G-X-Y$ joining $Q_i$ and $Q_j$, and avoiding each $Q_\ell$ with $\ell\not\in\{i,j\}$. 

Consider a component $J'$ of $J$. 
Let $(V_0,V_1,\dots)$ be a \textsc{BFS}-layering of $J'$. 
So $|V_0|=1$. We claim that $|V_i|< t$ for each $i\geq 1$. Suppose for the sake of contradiction that $|V_i|\geq t$ for some $i\geq 1$. Without loss of generality, $q_1,\dots,q_t\in V_i$. Let $A$ be the union of:
(1) all paths $Q_j$ corresponding to vertices in $V_0\cup\dots\cup V_{i-1}$,
(2) all paths in $G-X-Y$ corresponding to edges in $J[V_0\cup\dots\cup V_{i-1}]$, and 
(3) all paths in $G-X-Y$ corresponding to edges in $J$ between a vertex in $V_{i-1}$ and $q_1,\dots,q_t$, not including the vertex in $Q_1\cup\dots\cup Q_t$.
By construction, $A$ is a connected subgraph of $G-X-Y$ disjoint from $Q_1\cup\dots\cup Q_t$ and adjacent to each of $Q_1,\dots,Q_t$. Each of $A,Q_1,\dots,Q_t$ intersect $N_G(X)$ and $N_G(Y)$. Thus $X,Y,A,Q_1,\dots,Q_t$ form a $K^*_{3,t}$-model in $G$. This contradiction shows that $|V_i|< t$ for each $i\geq 0$. 

Concatenate the above-mentioned layerings of each component of $J$ to obtain a layering $(V_0,V_1,\dots)$ of $J$ with $|V_i|<t$ for each $i$. 
Assign each vertex $q_j$ in $J$ a weight of  $|N_G[Q_j]|$. 
The total weight is at most $|V(G)|+2|E(G)|$, 
which by \cref{extremal} is at most 
$(2\alpha t+1)n
\leq 3\alpha t n$ since  $t\geq 1$. 
Weight each set $V_i$ by the total weight of the vertices in $V_i$. 
Let $p:=\ceil{\sqrt{ 3\alpha n}}$.
There exists $i\in\{0,\dots,p-1\}$ such that 
$Z:=\bigcup\{V_j:j\equiv i\pmod{p}\}$ has weight at most 
 $3\alpha tn/p\leq t\sqrt{3\alpha n}$, and each component of $J-Z$ has less than $(p-1)t\leq t\sqrt{3\alpha  n}$ vertices. 
Let $S:=\bigcup\{Q_i:q_i\in Z\}$. 
By construction, $|N_G[S]|$ is at most the weight of $Z$, which is at most $t\sqrt{3\alpha n}$. Moreover, since $G[X\cup Q_i]$ and $G[Y\cup Q_i]$ are connected for all $i\in [m]$, it follows that $G[X\cup S]$ and $G[Y\cup S]$ are connected.

Consider a component $C$ of $G-X-Y-S$. 
Since each component of $J-Z$ has at most $t\sqrt{3\alpha n}$ vertices, 
the number of paths $Q_i$ that pass through $C$ is at most $t\sqrt{3\alpha n}$. By the choice of $Q_1,\dots,Q_m$, we have $\kappa_G(X,C,Y) \leq t \sqrt{3\alpha n}$. 
\end{proof}

\cref{K3t} follows from \cref{ObsPartitionProduct} and the next lemma. 

\begin{lem}
\label{NewK3t}
Let $G$ be a $K^*_{3,t}$-minor-free graph on $n$ vertices. Let $Q$ be a clique in $G$ with $|Q|\leq 2$ such that if $Q=\{x,y\}$ with $x\neq y$, then $\kappa_G(x,C,y)\leq 2t\sqrt{3\alpha n}$ for every component $C$ of $G-x-y$, where $\alpha$ is from \cref{extremal}. Then $G$ has a partition $\PP$ with non-empty parts, with width at most $\leq 4t\sqrt{3\alpha n}$, with $\tw(G/\PP)\leq 2$, and with  $\{v\}\in\PP$ for each $v\in Q$.
\end{lem}

\begin{proof}
We proceed by induction on $|V(G)\setminus Q|$.
The result is trivial if $V(G)=Q$. Now assume that $V(G)\neq Q$. If $Q=\emptyset$ then the result follows by induction where $Q:=\{v\}$ and $v$ is any vertex in $G$. Now assume that  $Q\neq\emptyset$. If $G$ is disconnected, then the result follows by applying induction in each component $C$ of $G$ with the clique $Q\cap V(C)$. Now assume that $G$ is connected.
First consider the case in which $Q=\{x\}$. Since $G$ is connected and $V(G)\neq Q$, there is a neighbour $v$ of $x$. 
By \cref{new} applied to $(G,\{x\},\{v\})$, 
there is a set $S \subseteq V(G-x-v)$ such that:
\begin{itemize}
\item $|N_{G}[S]|\leq t\sqrt{3\alpha n}$, 
\item $\kappa_G(x,C,v)\leq t\sqrt{3\alpha n}$ for each component $C$ of $G-x-v-S$, and 
\item $G[S\cup\{v\}]$ is connected. 
\end{itemize}
Let $G'$ be obtained from $G$ by contracting $S\cup\{v\}$ into a single vertex $v'$. So $G'$ is $K^*_{3,t}$-minor-free and $xv'$ is an edge of $G'$.
For each component $C'$ of $G'-x-v'$, we have 
\(\kappa_{G'}(x,C',v') \le
\kappa_{G}(x,C',v) + |N_{G}[S]| \le 2t\sqrt{3\alpha n}\).
Apply induction to $G'$ and $Q':=\{x,v'\}$ 
to obtain a partition $\PP'$ of $G'$
of width at most $4t\sqrt{3\alpha n}$ such that $\tw(G'/\PP')\leq 2$ and $\{x\},\{v'\}\in\PP'$.
Let $\PP$ be the partition of $G$ obtained from $\PP'$ by replacing $\{v'\}$ by $S\cup\{v\}$. 
So $\PP$ has width at most $\max\{4t\sqrt{3\alpha n},|S|+1\}=4t\sqrt{3\alpha n}$ and $\{x\}\in \PP$. Since $G/\PP\cong G'/\PP'$
we have $\tw(G/\PP)\leq 2$. 

Now consider the case in which $|Q|=2$ and $Q=\{x,y\}$. 

First, suppose that no component of $G-x-y$ intersects $N_G(x)$ and $N_G(y)$. Let $G_x$ be the subgraph of $G$ induced by $\{x\}$ and the components of $G-x-y$ that intersect $N_G(x)$. Let $G_y$ be the subgraph of $G$ induced by $\{y\}$ and the components of $G-x-y$ that intersect $N_G(y)$. By induction, $G_x$ has a partition $\PP_x$ of width at most $4t\sqrt{3\alpha n}$ such that $\tw(G_x/\PP_x)\leq 2$ and $\{x\}\in\PP_x$. Similarly, $G_y$ has a partition $\PP_y$ of width at most $4t\sqrt{3\alpha n}$ such that $\tw(G_y/\PP_y)\leq 2$ and $\{y\}\in\PP_y$. Let $\PP:=\PP_x\cup\PP_y$. So $\PP$ is a partition of $G$, and $G/\PP$ is obtained from the disjoint union of $G_x/\PP_x$ and $G_y/\PP_y$ by adding the edge $\{x\}\{y\}$. So $\tw(G/\PP)\leq 2$.

Now assume that some component $C$ of $G-x-y$ intersects both $N_G(x)$ and $N_G(y)$. 
By assumption, $\kappa_G(x,C,y)\leq 2t\sqrt{3\alpha n}$.
By Menger's theorem, there exists $S\subseteq V(C)$ such that $|S|\leq 2t\sqrt{3\alpha n}$ and $S$ separates  $N_G(x) \cap V(C)$ and $N_G(y)\cap V(C)$ in $C$. Choose $S$ to be minimal. 
Observe that $S\neq \varnothing$. 
No component of $C-S$ intersects both $N_G(x)$ and $N_G(y)$. Let $D_x$ be the union of the components of $C-S$ that intersect $N_G(x)$. Let $D_y$ be the union of the components of $C-S$ that intersect $N_G(y)$. Let $F$ be the union of the components of $C-S$ that intersect neither $N_G(x)$ nor $N_G(y)$. 
Let $G_C := G[(V(C)\cup\{x,y\})\setminus V(F)]$. 

Let $Y:=\{y\}\cup V(D_y)\cup S$. By the minimality of $S$, $G[Y]$ is connected, and $G[\{x\}\cup Y]$ is connected since $xy\in E(G)$. By \cref{new} applied to $(G_C,\{x\},Y)$, there is a set 
$S_x\subseteq V(G_C-x-Y)=V(D_x)$ such that:
\begin{itemize}
\item $|N_{G_C}[S_x]| \leq t\sqrt{3\alpha n}$, 
\item $\kappa_{G_C}(x,C',Y) \leq t\sqrt{3\alpha n}$ for every component $C'$ of $D_x-S_x$, and 
\item $G_C[S_x\cup Y]$ is connected. 
\end{itemize}
Let $G_x$ be the graph obtained from $G_C$ by contracting $S_x\cup Y$ into a single vertex $z$.
Thus $G_x$ is $K^*_{3,t}$-minor-free and $xz$ is an edge of $G_x$. Consider a component $C'$ of $G_x-x-z$. Then $C'$ is a component of $D_x-S_x$, and 
\[\kappa_{G_x}(x,C',z)
=\kappa_{G_C}(x,C',S_x\cup Y)
\leq \kappa_{G_C}(x,C',Y) + |N_{G_C}[S_x]|
\leq 2t\sqrt{3\alpha n}.\] 
By induction, $G_x$ has a partition $\PP_x$ of width at most $4t\sqrt{3\alpha n}$ such that $\tw(G_x/\PP_x)\leq 2$ and $\{x\},\{z\}\in \PP_x$. 
(Note that $|V(G_x)| < |V(G)|$ since $S\neq \varnothing$, so we may apply induction.)

Let $X:=\{x\}\cup V(D_x)\cup S$. By a symmetric argument to the above, there is a set $S_y\subseteq V(D_y)$ such that:
\begin{itemize}
\item $|N_{G_C}[S_y]| \leq t\sqrt{3\alpha n}$, 
\item $\kappa_{G_C}(y,C',X) \leq t\sqrt{3\alpha n}$ for every component $C'$ of $D_y-S_y$, and 
\item $G_C[S_y\cup X]$ is connected.
\end{itemize}
Let $G_y$ be the graph obtained from $G_C$ by contracting $S_y\cup X$ into a single vertex $z$.
Thus $G_y$ is $K^*_{3,t}$-minor-free and $yz$ is an edge of $G_y$. By a symmetric argument, 
$G_y$ has a partition $\PP_y$ of width at most $4t\sqrt{3\alpha n}$ such that $\tw(G_y/\PP_y)\leq 2$ and $\{y\},\{z\}\in \PP_y$.

Note that $G[X\cup Y]$ is connected. 
Let $G_F$ be the graph obtained from $G[\{x,y\}\cup V(C)]$ by contracting $X\cup Y$ into a single vertex $z$. So $V(G_F)=\{z\}\cup V(F)$, and $G_F$ is $K^*_{3,t}$-minor-free. By induction, $G_F$ has a partition $\PP_F$ of width at most $4t\sqrt{3\alpha n}$ such that $\tw(G_F/\PP_F)\leq 2$ and $\{z\}\in\PP_F$. 

Let $G':=G-V(C)$. So $G'$ is $K^*_{3,t}$-minor-free, and $xy$ is an edge of $G'$. By induction, $G'$ has a partition $\PP'$ of width at most $4t\sqrt{3\alpha n}$ such that $\tw(G'/\PP')\leq 2$ and $\{x\},\{y\}\in\PP'$. 

Let $\PP$ be the partition of $G$ obtained from $\PP_x\cup\PP_y\cup \PP_F \cup\PP'$ by replacing each of the three instances of $\{z\}$ by $S\cup S_x\cup S_y$. The width of $\PP$ is at most $4t\sqrt{3\alpha n}$. Note that $G/\PP$ is obtained by pasting the four graphs $G_x/\PP_x$, $G_y/\PP_y$, $G_F/\PP_F$ and $G'/\PP'$ on the triangle $\{x\},\{y\},S\cup S_x\cup S_y$, where each of the four graphs contains vertices in two of $\{x\}$, $\{y\}$ and $S\cup S_x\cup S_y$. Thus  $G/\PP$ is obtained from graphs of treewidth at most 2 by pasting on edges. Hence $\tw(G/\PP)\leq 2$ and $\{x\},\{y\}\in \PP$. 
\end{proof}

\section{\Large Open Problems}

It is an intriguing open problem to determine $f(\GG)$ for a given proper minor-class $\GG$. It is possible that $f(\GG)\leq 2$ for every minor-closed class $\GG$. This is open even when $\GG$ is the class of $K_5$-minor-free graphs \cite{ISW}. Let $\mathcal{A}$ be the class of apex graphs\footnote{A graph $H$ is \defn{apex} if $H-v$ is planar for some vertex $v$ of $H$.}, which is minor-closed. It is open whether $f(\mathcal{A})\leq 2$. This is equivalent to the following open problem (which would strengthen \cref{Planar}): for every $n$-vertex planar graph $G$, does there exist an apex-forest\footnote{A graph $H$ is an \defn{apex forest} if $H-v$ is a forest for some vertex $v$ of $H$.} $H$ such that $G$ is contained in $H\boxtimes K_m$ where $m\in\OO(\sqrt{n})$? 

It is also open whether treewidth can be replaced by pathwidth in  \cref{ConstantTwSqrtn,Planar,K3t}. That is, for a proper minor-closed class $\GG$, are there integers $k,c$ such that every $n$-vertex graph in $\GG$ is contained in $H\boxtimes K_m$, for some graph $H$ with pathwidth at most $k$, where $m\leq c\sqrt{n}$? Two pieces of evidence suggest a positive answer. First, $n$-vertex graphs in a proper minor-closed class have pathwidth $\OO(\sqrt{n})$; see \citep{Bodlaender98}. Second, if $\GG$ has bounded treewidth, then the answer is `yes' with $k=1$, since \citet{DvoWoo} showed that $n$-vertex graphs in $\GG$ have $H$-partitions of width $\OO(\sqrt{n})$ where $H$ is a star, which has pathwidth~1. This question is open for planar graphs. 
 
Analogous questions are interesting and open for several non-minor-closed classes~\citep{DvoWoo}.

\paragraph{Acknowledgement:} This work was partially completed at the 10th Annual Workshop on Geometry and Graphs held at Bellairs Research Institute in February 2023.

{\fontsize{11pt}{12pt}\selectfont
\bibliographystyle{DavidNatbibStyle}
\bibliography{DavidBibliography}}

\end{document}